\title{Counting Square-Free Numbers}
\author{Jakub Pawlewicz}
\institute{Institute of Informatics\\ University of Warsaw\\ \email{pan@mimuw.edu.pl}}
\newcommand{\softO}{\tilde O}
\newcommand{\var}[1]{\textit{#1}}
\algnewcommand\algorithmicforeach{\textbf{for each}}
\begin{document}

\maketitle

\begin{abstract}
  The main topic of this contribution is the problem of counting square-free numbers not exceeding $n$.
  Before this work we were able to do it in time%
  \footnote{Comparing to the Big-O notation, Soft-O ($\softO$) ignores logarithmic factors.}
  $\softO(\sqrt{n})$.
  Here, the algorithm with time complexity $\softO(n^{2/5})$
  and with memory complexity $\softO(n^{1/5})$ is presented.
  Additionally, a parallel version is shown,
  which achieves full scalability.
  
  As of now the highest computed value was for $n=10^{17}$.
  Using our implementation we were able to calculate the value for $n=10^{36}$
  on a cluster.
\end{abstract}

\keywords{square-free number, Möbius function, Mertens function}

\section{Introduction}

  A square-free number is an integer which is not divisible by a square of any integer greater than one.
  Let $S(n)$ denote the number of square-free positive integers less or equal to $n$.
  We can approximate the value of $S(n)$ using the asymptotic equation:
  \begin{equation*}
    S(n)=\frac{6}{\pi^2}n+O(\sqrt{n}).
  \end{equation*}
  Under the assumption of the Riemann hypothesis the error term can be further reduced \cite{jia}:
  \begin{equation*}
    S(n)=\frac{6}{\pi^2}n+O(n^{17/54+\varepsilon}).
  \end{equation*}
  Although these asymptotic equations allow us to compute $S(n)$ with high accuracy,
  they do not help to compute the exact value.
  
  The basic observation for efficient algorithms is the following formula.
  \begin{theorem}\label{thm:Sn}
    \begin{equation}\label{eq:Sn}
      S(n)=\sum_{d=1}^{\lfloor\sqrt{n}\rfloor}\mu(d)\Bigl\lfloor\frac{n}{d^2}\Bigr\rfloor\enspace,
    \end{equation}
    where $\mu(d)$ is the Möbius function.
  \end{theorem}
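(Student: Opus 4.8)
The plan is to deduce the formula from the single fact that the M\"obius function detects square-freeness: for every positive integer $m$,
\[
  \sum_{\substack{d\ge 1\\ d^2\mid m}}\mu(d)=
  \begin{cases}
    1 & \text{if $m$ is square-free},\\
    0 & \text{otherwise}.
  \end{cases}
\]
Granting this, the theorem follows by writing $S(n)$ as a sum of indicators and interchanging the order of summation. First I would rewrite $S(n)=\sum_{m=1}^{n}[\,m\text{ square-free}\,]$ and replace the indicator by the sum above, obtaining a double sum over $m\le n$ and over $d$ with $d^2\mid m$.

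Next I would establish the detecting identity itself. Writing $m=\prod_i p_i^{a_i}$, I would introduce the square part $q=\prod_i p_i^{\lfloor a_i/2\rfloor}$ and verify that the condition $d^2\mid m$ is equivalent to $d\mid q$, since $d^2\mid m$ holds exactly when $v_p(d)\le\lfloor v_p(m)/2\rfloor$ for every prime $p$. The inner sum then collapses to $\sum_{d\mid q}\mu(d)$, which equals $1$ when $q=1$ and $0$ otherwise by the defining property of $\mu$; and $q=1$ is precisely the statement that $m$ is square-free. This multiplicative bookkeeping is the conceptual heart of the argument, although it is entirely standard.

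Finally I would interchange the two summations. For a fixed $d$ the integers $m\in\{1,\dots,n\}$ divisible by $d^2$ are exactly the multiples $d^2,2d^2,\dots$, of which there are $\lfloor n/d^2\rfloor$. Moreover such an $m$ can exist only when $d^2\le n$, i.e.\ $d\le\lfloor\sqrt{n}\rfloor$, which fixes the range of the outer sum. Collecting terms yields
\[
  S(n)=\sum_{d=1}^{\lfloor\sqrt{n}\rfloor}\mu(d)\Bigl\lfloor\frac{n}{d^2}\Bigr\rfloor,
\]
as claimed. The only point that needs a moment's care is the interchange of summation order together with the truncation of the outer index at $\lfloor\sqrt{n}\rfloor$; everything else is a direct application of the M\"obius identity, so I do not anticipate a genuine obstacle here.
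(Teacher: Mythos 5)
Your proof is correct, but it is organized differently from the paper's. The paper proves the formula by the inclusion--exclusion principle on sets: it defines $A_p=\{a\le n: p^2\mid a\}$ for each prime $p$, expands $\bigl|\bigcap_p\overline{A_p}\bigr|$ as an alternating sum over finite sets of distinct primes, computes $|A_{p_1}\cap\dots\cap A_{p_i}|=\lfloor n/(p_1^2\cdots p_i^2)^{\vphantom{2}}\rfloor$, and only at the very end recognizes the coefficient $(-1)^i$ attached to a product of $i$ distinct primes as $\mu(d)$. You instead start from the arithmetic identity $\sum_{d^2\mid m}\mu(d)=[m\text{ square-free}]$, prove it via the square part $q=\prod_i p_i^{\lfloor a_i/2\rfloor}$ and the standard fact $\sum_{d\mid q}\mu(d)=[q=1]$, and then swap the (finite) double sum over $m\le n$ and $d$ with $d^2\mid m$. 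The two arguments carry the same combinatorial content --- your detection identity is exactly what the paper's inclusion--exclusion computes term by term --- but your version buys a cleaner bookkeeping: the interchange of two finite sums needs no justification, the truncation at $d\le\lfloor\sqrt n\rfloor$ falls out immediately since $\lfloor n/d^2\rfloor=0$ beyond that point, and the Möbius function enters from the start rather than being identified post hoc from the signs. The paper's version, in exchange, makes the ``sieve'' intuition (excluding multiples of $p^2$ prime by prime) more visible. Both are complete; your reduction of $d^2\mid m$ to $d\mid q$ via $p$-adic valuations is exactly the right way to nail down the detection identity, and there is no gap.
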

  The simple proof of this theorem using the inclusion-exclusion principle
  is presented in App. \ref{sec:proof-Sn}.
  The same proof can be found in \cite{michon}.
  It allows the author to develop an $\softO(\sqrt{n})$ algorithm and
  to compute $S(10^{17})$.
  In Sect. \ref{sec:basic} we show details of this algorithm
  together with the reduction of the memory complexity to $O(\sqrt[4]{n})$.
  
  To construct a faster algorithm we have to play with the summation \eqref{eq:Sn}.
  In Sect. \ref{sec:Sn_eff} the new formula is derived and stated in
  Theorem \ref{thm:eff}.
  Using this theorem we are able to construct the algorithm
  working in time $\softO(n^{2/5})$.
  It is described in the rest of Sect. \ref{sec:eff}.
  However, to achieve a memory efficient procedure more research is required.
  The memory reduction problem is discussed in Sect. \ref{sec:memory},
  where the modifications leading to the memory complexity $\softO(n^{1/5})$
  are presented.
  The result is put into Algorithm \ref{alg:Sn_eff}.
  
  Applying Algorithm \ref{alg:Sn_eff} for huge $n$ leads to computing time measured in years.
  Therefore, a practical algorithm should be distributed.
  Section \ref{sec:parallel} addresses the parallelization problem.
  At first sight it looks that Algorithm \ref{alg:Sn_eff} can be easily distributed,
  but a deeper analysis uncovers new problems.
  We present a solution for these problems, and get a fully scalable method.
  As a practical evidence, we computed $S(10^e)$ for all integers $e\le 36$,
  whereas before, the largest known value of $S(n)$ was for $n=10^{17}$ \cite{michon,oeis}.
  For instance, the value $S(10^{36})$ was computed in 88 hours using 256 processors.
  The detailed computation results are attached in Sect. \ref{sec:results}.

\section{\texorpdfstring{The $\softO(\sqrt n)$ algorithm}{The Õ(√n) algorithm}}
%\section{Basic algorithm}
\label{sec:basic}

  We simply use Theorem \ref{thm:Sn} to compute $S(n)$.
  In order to compute summation \eqref{eq:Sn} we need to find the values of $\mu(d)$ for $d=1,\dots,K$, where $K=\lfloor\sqrt{n}\rfloor$.
  This can be done in time $O(K\log\log K)$ and in memory $O(\sqrt{K})$
  using a sieve similar to the sieve of Eratosthenes \cite{rivat}.
  See App. \ref{sec:calc-mu} for a detailed description.
  This sieving algorithm tabulates values in blocks of size $B=\lfloor\sqrt{K}\rfloor$.
  We assume we have the function $\textsc{TabulateMöbiusBlock}$ such that
  the call $\textsc{TabulateMöbiusBlock}(a, b)$ outputs the array $\var{mu}$ containing the values of the Möbius function:
  $\mu(k)=\var{mu}[k]$ for each $k\in(a,b\,]$.
  This function works in time $O(b\log\log b)$ and in memory $O(\max(\sqrt{b},b-a))$.
  Now, to calculate $S(n)$, we split the interval $[1,K]$ into $O(\sqrt{K})$ blocks of size $B$.
  It is presented in Algorithm~\ref{alg:Sn_basic}.
  
  \begin{algorithm}
    \caption{Calculating $S(n)$ in time $\softO(\sqrt{n})$ and in memory $O(\sqrt[4]{n})$}
    \label{alg:Sn_basic}
    \begin{algorithmic}[1]
      \State $s\gets 0, b\gets 0, K\gets\Theta(\lfloor\sqrt{n}\rfloor), B\gets \lfloor\sqrt{K}\rfloor$
      \Repeat
        \State $a\gets b, b\gets\min(b+B, K)$
        \State \Call{TabulateMöbiusBlock}{$a, b$}
        \For{$k=a+1,\dots,b$}
	  \State $\displaystyle s\gets s+\var{mu}[k]\cdot\Bigl\lfloor\frac{n}{k^2}\Bigr\rfloor$
        \EndFor
      \Until{$a\ge K$}
      \State \Return $s$
    \end{algorithmic}
  \end{algorithm}

  Summarizing, the basic algorithm has $\softO(\sqrt{n})$ time complexity
  and $O(\sqrt[4]{n})$ memory complexity.
  %TODO rewrite explicitly enumerating blocks

\section{The New Algorithm}
\label{sec:eff}

  The key point of discovering a faster algorithm is a derivation of a new formula from \eqref{eq:Sn} in Sect. \ref{sec:Sn_eff}.
  The new formula depends on the Mertens function (the Möbius summation function).
  Section \ref{sec:Mertens} explains how one may compute the needed values.
  Section \ref{sec:eff-alg} states the algorithm.
  In Sect. \ref{sec:eff-complexity} the optimal values of the algorithm parameters are estimated, and the resulting time complexity of $\softO(n^{2/5})$ is derived.

\subsection{Establishing the New Formula}
\label{sec:Sn_eff}

  To alter \eqref{eq:Sn} we break the sum.
  We split the summation range $[1,\lfloor\sqrt n\rfloor]$ into two smaller intervals
  $[1,D]$ and $(D,\lfloor\sqrt n\rfloor]$:
  \begin{equation*}
    S(n)=S_1(n)+S_2(n)\enspace,
  \end{equation*}
  where
  \begin{align*}
    S_1(n)&=\sum_{1\le d\le D}\mu(d)\Bigl\lfloor\frac{n}{d^2}\Bigr\rfloor\enspace,\\
    S_2(n)&=\sum_{d>D}\mu(d)\Bigl\lfloor\frac{n}{d^2}\Bigr\rfloor\enspace.
  \end{align*}
  We introduced a new variable $D$.
  Optimal value of this variable will be determined later.
  Sum $S_2(n)$ can be rewritten using Iverson's convention\footnote{%
    $[P]=\begin{cases}1\quad&\text{if }P\text{ is true}\enspace,\\
    0&\text{otherwise}\enspace.\end{cases}$
  }:
  \begin{equation}\label{eq:S2n_derive1}
    S_2(n)
    =\sum_{d>D}\mu(d)\Bigl\lfloor\frac{n}{d^2}\Bigr\rfloor
    =\sum_{d>D}\sum_i\Bigl[i=\Bigl\lfloor\frac{n}{d^2}\Bigr\rfloor\Bigr]i\mu(d)\enspace.
  \end{equation}
  The predicate in brackets transforms as follows:
  \begin{equation*}
    i=\Bigl\lfloor\frac{n}{d^2}\Bigr\rfloor
    \iff
    i\leq\frac{n}{d^2}<i+1
    \iff
    \biggl\lfloor\sqrt{\frac{n}{i+1}}\biggl\rfloor
    <d\leq
    \biggl\lfloor\sqrt{\frac{n}{i}}\biggl\rfloor\enspace.
  \end{equation*}
  To shorten the notation we introduce a new variable $I$ and a new sequence $x_i$:
  \begin{equation}\label{eq:xi_def}
    x_i=\biggl\lfloor\sqrt{\frac{n}{i}}\biggl\rfloor\text{ for }i=1,\ldots,I\enspace.
  \end{equation}
  The sequence $x_i$ should be strictly decreasing.
  To ensure this, it is enough to set $I$ such that
  \begin{align}
    \sqrt{\frac{n}{I-1}}-\sqrt{\frac{n}{I}}&\ge1\notag\\
    \sqrt{n}&\ge\frac{\sqrt{I}\sqrt{I-1}}{\sqrt{I}-\sqrt{I-1}}\notag\\
    \sqrt{n}&\ge\sqrt{I}\sqrt{I-1}(\sqrt{I}+\sqrt{I-1})\notag\\
    n&\ge I(I-1)(\sqrt{I}+\sqrt{I-1})^2\enspace.\label{eq:Ibound1}
  \end{align}
  Because 
  \begin{equation*}
    I(I-1)(\sqrt{I}+\sqrt{I-1})^2<I\cdot I\cdot(2\sqrt{I})^2=4I^3\enspace,
  \end{equation*}
  to satisfy \eqref{eq:Ibound1}, it is enough to set
  \begin{equation}\label{eq:Ibound}
    I\leq\sqrt[3]{\frac{n}{4}}\enspace.
  \end{equation}
  
  Suppose we set $I$ satisfying $\eqref{eq:Ibound}$.
  Now, we take $D=x_I$ and we use $x_i$ notation \eqref{eq:xi_def} in \eqref{eq:S2n_derive1}:
  \begin{equation}\label{eq:S2n_derive2}
    S_2(n)
    =\sum_{d>x_I}\sum_i[x_{i+1}<d\leq x_i]i\mu(d)
    =\sum_{1\leq i<I}i\sum_{x_{i+1}<d\leq x_i}\mu(d)\enspace.
  \end{equation}
  Finally, it is convenient to use the Mertens function:
  \begin{equation}\label{eq:mertens}
    M(x)=\sum_{1\leq i\leq x}\mu(i)=\sum_{i=1}^{\lfloor x\rfloor}\mu(i)\enspace,
  \end{equation}
  \noindent thus we simplify \eqref{eq:S2n_derive2} to:
  \begin{equation}\label{eq:S2n}
    S_2(n)
    =\sum_{1\leq i<I}i\bigl(M(x_i)-M(x_{i+1})\bigr)
    =\biggl(\sum_{1\leq i<I}M(x_i)\biggr)-(I-1)M(x_I)\enspace.
  \end{equation}
  Theorem \ref{thm:eff} summarizes the above analysis.
  \begin{theorem}\label{thm:eff}
    Let $I$ be a positive integer satisfying
    $\displaystyle I\leq\sqrt[3]{\frac{n}{4}}$.
    Let
    $\displaystyle x_i=\biggl\lfloor\sqrt{\frac{n}{i}}\biggl\rfloor$ for $i=1,\ldots,I$
    and $D=x_I$.
    Then $S(n)=S_1(n)+S_2(n)$, where
    \begin{align*}
      S_1(n)&=\sum_{d=1}^D\mu(d)\Bigl\lfloor\frac{n}{d^2}\Bigr\rfloor\enspace,\\
      S_2(n)&=\Biggl(\sum_{i=1}^{I-1}M(x_i)\Biggr)-(I-1)M(x_I)\enspace.
    \end{align*}
  \end{theorem}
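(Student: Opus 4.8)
The plan is to derive the statement directly from the basic formula in Theorem~\ref{thm:Sn} by splitting the summation range and then rewriting the tail. First I would note that since $x_1=\lfloor\sqrt n\rfloor$ and $D=x_I$ with the $x_i$ decreasing, the cutoff $D$ lies inside the range of \eqref{eq:Sn}, so I can write $S(n)=S_1(n)+S_2(n)$ with $S_1(n)=\sum_{d=1}^{D}\mu(d)\lfloor n/d^2\rfloor$ handled by definition, concentrating all the remaining work in the tail $S_2(n)=\sum_{d>D}\mu(d)\lfloor n/d^2\rfloor$.

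The core idea for $S_2(n)$ is to replace the summation over $d$ by a summation over the value $i=\lfloor n/d^2\rfloor$. I would first establish the equivalence
\[
  i=\Bigl\lfloor\tfrac{n}{d^2}\Bigr\rfloor\iff x_{i+1}<d\le x_i,
\]
which follows from $i\le n/d^2<i+1\iff\sqrt{n/(i+1)}<d\le\sqrt{n/i}$ together with the integrality of $d$ and the definition \eqref{eq:xi_def} of $x_i$. Grouping the terms of $S_2(n)$ by the common value $i$ and invoking the Mertens function \eqref{eq:mertens}, each inner block $\sum_{x_{i+1}<d\le x_i}\mu(d)$ collapses to $M(x_i)-M(x_{i+1})$, so that $S_2(n)=\sum_i i\bigl(M(x_i)-M(x_{i+1})\bigr)$.

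The step that needs the most care --- and the reason the hypothesis $I\le\sqrt[3]{n/4}$ appears --- is pinning down the exact range of $i$ and checking that the blocks $(x_{i+1},x_i]$ partition the tail interval $(x_I,x_1]$ cleanly, with neither gaps nor overlaps. I would argue that as $d$ ranges over $(x_I,x_1]$ the index $i=\lfloor n/d^2\rfloor$ runs over $1,\dots,I-1$, so the outer sum is $\sum_{1\le i<I}$. For this grouping to be valid every block must be a genuine nonempty half-open interval of integers, which holds precisely when the sequence $x_i$ is strictly decreasing on $i=1,\dots,I$. That monotonicity is exactly what the bound \eqref{eq:Ibound1}, and hence the simpler sufficient condition \eqref{eq:Ibound}, was engineered to guarantee; this is where I expect the main subtlety to lie, since an off-by-one at the endpoints or a coincidence $x_{i+1}=x_i$ would corrupt the regrouping.

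Finally I would simplify $\sum_{1\le i<I}i\bigl(M(x_i)-M(x_{i+1})\bigr)$ by summation by parts. Rearranging and reading off the coefficient attached to each $M(x_k)$, the contributions telescope: $M(x_k)$ picks up coefficient $k-(k-1)=1$ for $2\le k\le I-1$, the term $M(x_1)$ keeps coefficient $1$, and $M(x_I)$ collects coefficient $-(I-1)$. This yields $S_2(n)=\bigl(\sum_{i=1}^{I-1}M(x_i)\bigr)-(I-1)M(x_I)$, matching the claimed expression and completing the proof.
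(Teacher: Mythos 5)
Your proposal is correct and follows essentially the same route as the paper: split the sum of Theorem~\ref{thm:Sn} at $D=x_I$, regroup the tail by the value $i=\lfloor n/d^2\rfloor$ using the equivalence $i=\lfloor n/d^2\rfloor\iff x_{i+1}<d\le x_i$ (whose validity rests on the strict monotonicity of $x_i$ guaranteed by $I\le\sqrt[3]{n/4}$), collapse each block to $M(x_i)-M(x_{i+1})$, and telescope. You also correctly identify the monotonicity of the $x_i$ as the point where the hypothesis on $I$ enters, which is exactly the role it plays in the paper's derivation of \eqref{eq:Ibound1} and \eqref{eq:Ibound}.
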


\subsection{Computing Values of the Mertens Function}
\label{sec:Mertens}

  By applying the Möbius inversion formula to \eqref{eq:mertens}
  we can get a nice recursion for the Mertens function:
  \begin{equation}\label{eq:mertens_rec}
    M(x)=1-\sum_{d\geq 2}M\Bigl(\frac{x}{d}\Bigr)\enspace.
  \end{equation}
  Here, an important observation is that having all values $M(x/d)$ for $d\ge2$,
  we are able to calculate $M(x)$ in time $O(\sqrt x)$.
  This is because there are at most $2\sqrt x$ different integers of the form $\lfloor x/d\rfloor$,
  since $x/d<\sqrt x$ for $d>\sqrt x$.
  
\subsection{The Algorithm}
\label{sec:eff-alg}

  The simple algorithm exploiting the above ideas is presented in Algorithm \ref{alg:sqrfree-fast}.
  \begin{algorithm}[htb]
    \caption{Efficient counting square-free numbers}
    \label{alg:sqrfree-fast}
    \begin{algorithmic}[1]
      \State compute $S_1(n)$ and $M(d)$ for $d=1,\dots,D$
      \For{$i=I-1,\dots,1$}\label{li:compute_mxi_for}
	\State compute $M(x_i)$ by \eqref{eq:mertens_rec}\label{li:compute_mxi}
      \EndFor\label{li:compute_mxi_endfor}
      \State compute $S_2(n)$ by \eqref{eq:S2n}
      \State\Return $S_1(n)+S_2(n)$
    \end{algorithmic}
  \end{algorithm}
  
  To compute $M(x_i)$ (line \ref{li:compute_mxi}) we need the values $M(x_i/d)$ for $d\ge2$.
  If $x_i/d\le D$ then $M(x_i/d)$ was determined during the computation of $S_1(n)$.
  If $x_i/d>D$ then see that
  \begin{equation}\label{eq:xi_by_d}
    \Bigl\lfloor\frac{x_i}{d}\Bigr\rfloor
    =\biggl\lfloor\frac{\bigl\lfloor\sqrt{\frac{n}{i}}\bigr\rfloor}{d}\biggr\rfloor
    =\biggl\lfloor\frac{\sqrt{\frac{n}{i}}}{d}\biggr\rfloor
    =\biggl\lfloor\sqrt{\frac{n}{d^2i}}\biggr\rfloor=x_{d^2i}\enspace,
  \end{equation}
  thus $M(x_i/d)=M(x_j)$ for $j=d^2i$.
  Of course $j<I$, because otherwise $\sqrt{n/j}\leq D$.
  Observe that it is important to compute $M(x_i)$ in a decreasing order (line~\ref{li:compute_mxi_for}).

\subsection{The Complexity}
\label{sec:eff-complexity}

  Let us estimate the time complexity of Algorithm \ref{alg:sqrfree-fast}.
  Computing $S_1(n)$ has complexity $O(D\log\log D)$.
  
  Computing $M(x_i)$ takes $O(\sqrt{x_i})$ time.
  The entire for loop (line \ref{li:compute_mxi_for}--\ref{li:compute_mxi_endfor})
  has the time complexity:
  \begin{equation}\label{eq:effalg_time1}
    \sum_{i=1}^{I}O(\sqrt{x_i})
    =\sum_{i=1}^{I}O\Biggl(\sqrt{\sqrt{\frac{n}{i}}}\,\Biggr)
    =O\biggl(\sqrt[4]{n}\sum_{i=1}^I\frac{1}{\sqrt[4]i}\biggr)\enspace.
  \end{equation}
  Using the asymptotic equality
  \begin{equation*}
    \sum_{i=1}^I\frac{1}{\sqrt[4]i}=\Theta(I^{3/4})\enspace,
  \end{equation*}
  \eqref{eq:effalg_time1} rewrites to:
  \begin{equation*}
    O\bigl(n^{1/4}I^{3/4}\bigr)\enspace.
  \end{equation*}
  
  The computation of $S_2(n)$ is dominated by the for loop.
  Summarizing the time complexity of Algorithm \ref{alg:sqrfree-fast} is
  \begin{equation}\label{eq:effalg_time_DI}
    O\bigl(D\log\log D+n^{1/4}I^{3/4}\bigr)\enspace.
  \end{equation}
  We have to tune the selection of $I$ and $D$ to minimize the expression \eqref{eq:effalg_time_DI}.
  The larger $I$ we take the smaller $D$ will be,
  thus the parameters $I$ and $D$ are optimal when
  \begin{equation*}
    O(D\log\log D)=O\bigl(n^{1/4}I^{3/4}\bigr)\enspace.
  \end{equation*}
  This takes place for
  \begin{equation*}
    I=n^{1/5}(\log\log n)^{4/5}\enspace,
  \end{equation*}
  and then
  \begin{equation*}
    O(D\log\log D)=O\bigl(n^{1/4}I^{3/4}\bigr)=O(n^{2/5}(\log\log n)^{3/5})\enspace.
  \end{equation*}
  \begin{theorem}\label{thm:effalg_time}
    The time complexity of Algorithm \ref{alg:sqrfree-fast} is
    $O(n^{2/5}(\log\log n)^{3/5})=\softO(n^{2/5})$
    for $I=n^{1/5}(\log\log n)^{4/5}=\softO(n^{1/5})$.
  \end{theorem}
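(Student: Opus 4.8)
The plan is to treat the running time in \eqref{eq:effalg_time_DI} as a function of the single free parameter $I$, eliminate $D$ to obtain a clean expression, and then balance the two competing terms.

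First I would express $D$ in terms of $I$. Since $D=x_I=\lfloor\sqrt{n/I}\rfloor$, we have $D=\Theta(n^{1/2}I^{-1/2})$, the floor being negligible in the admissible range. Moreover, under the constraint $1\le I\le\sqrt[3]{n/4}$ of Theorem \ref{thm:eff}, this $D$ lies between $\Theta(n^{1/3})$ and $\Theta(n^{1/2})$, so $\log D=\Theta(\log n)$ and hence $\log\log D=\Theta(\log\log n)$. Substituting these into \eqref{eq:effalg_time_DI} turns the bound into
\[
  O\bigl(n^{1/2}I^{-1/2}\log\log n + n^{1/4}I^{3/4}\bigr)\enspace,
\]
a function of $I$ alone.

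Next I would minimize this over $I$. The first term is decreasing in $I$ and the second is increasing, so their sum is, up to a constant factor, minimized at the crossover point where the two are equal; because we only aim at a $\softO$-bound, it suffices to balance them rather than to locate the exact minimizer. Setting $n^{1/2}I^{-1/2}\log\log n=n^{1/4}I^{3/4}$ gives $I^{5/4}=n^{1/4}\log\log n$, hence $I=n^{1/5}(\log\log n)^{4/5}$, which is exactly the prescribed value. Plugging this back into either term yields the common value $n^{2/5}(\log\log n)^{3/5}$, establishing the claimed time bound $\softO(n^{2/5})$; and since $I=n^{1/5}(\log\log n)^{4/5}=\softO(n^{1/5})$, the parameter claim follows as well.

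Finally I would confirm admissibility: Theorem \ref{thm:eff} requires $I\le\sqrt[3]{n/4}$, and since $n^{1/5}(\log\log n)^{4/5}=o(n^{1/3})$, this holds for all sufficiently large $n$, so the formula driving the algorithm is valid at this setting. The only place demanding care is the bookkeeping of the $\log\log n$ factor — making sure it enters $I$ with exponent $4/5$ and survives into the final bound with exponent $3/5$ — but this is routine once $D$ and $\log\log D$ are re-expressed in terms of $n$ and $I$; there is no genuine analytic obstacle.
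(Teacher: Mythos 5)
Your proposal is correct and follows essentially the same route as the paper: starting from the bound $O(D\log\log D+n^{1/4}I^{3/4})$ of \eqref{eq:effalg_time_DI}, substituting $D=\Theta(\sqrt{n/I})$, and balancing the two terms to obtain $I=n^{1/5}(\log\log n)^{4/5}$ and the common value $n^{2/5}(\log\log n)^{3/5}$. You are merely more explicit than the paper about eliminating $D$, checking $\log\log D=\Theta(\log\log n)$, and verifying the admissibility constraint $I\le\sqrt[3]{n/4}$.
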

  
  The bad news are the memory requirements.
  To compute $M(x_i)$ values we need to remember $M(d)$ for all $d=1,\dots,D$,
  thus we need $O(D)=\softO(n^{2/5})$ memory.
  This is even greater memory usage than in the basic algorithm.
  In the next section we show how to overcome this problem.

\section{Reducing Memory}
\label{sec:memory}

  To reduce memory we have to process values of the Möbius function in blocks.
  This affects the computation of needed Mertens function values which were previously computed by the recursion \eqref{eq:mertens_rec} as described in Sect. \ref{sec:mem-blocks}.
  These values have to be computed in a more organized manner.
  Section \ref{sec:mem-updates} provides necessary utilities for that.
  Moreover in Sect. \ref{sec:mem-structs} some data structures are introduced in order to achieve a satisfying time complexity.
  Finally, Sect. \ref{sec:mem-alg} states the algorithm together with a short complexity analysis.

\subsection{Splitting into Blocks}
\label{sec:mem-blocks}

  We again apply the idea of splitting computations into smaller blocks.
  To compute $S_1(n)$ we need to determine $\mu(d)$ and $M(d)$ for $d=1,\dots,D$.
  We do it in blocks of size $B=\Theta(\sqrt{D})$ by calling procedure \textsc{TabulateMöbiusBlock}.
  That way we are able to compute $S_1(n)$,
  but to compute $S_2(n)$ we face to the following problem.
  
  We need to compute $M(x_i)$ for integer $i\in[1,I)$.
  Previously, we memorized all needed $M(1),\dots,M(D)$ values and used recursion \eqref{eq:mertens_rec}.
  Now, we do not have unrestricted access to values of the Mertens function.
  After processing a block $(a,b\,]$ we have only access to values $M(k)$ for $k\in(a,b\,]$.
  We have to utilize these values before we switch to the next block.
  If a value $M(k)$ occurs on the right hand side of the recursion \eqref{eq:mertens_rec} for $x=x_i$ for some $i\in[1,I)$,
  then we should make an update.

  The algorithm should look as follows.
  We start the algorithm by creating an array $\var{Mx}$:
  \begin{equation*}
    \var{Mx}[i]\gets 1\quad\text{for }i=1,\dots,I-1\enspace.
  \end{equation*}
  During the computation of $S_1(n)$ we determine $M(k)$ for some $k$.
  Then, for every $i\in[1,I)$ such that $M(k)$ occurs in the sum
  \begin{equation}\label{eq:sumM}
    \sum_{d\ge 2}M\Bigl(\frac{x_i}{d}\Bigr)\enspace,
  \end{equation}
  i.e. for every $i\in[1,I)$ such that there exists an integer $d\ge 2$ such that 
  \begin{equation}\label{eq:ik_property}
    k=\Bigl\lfloor\frac{x_i}{d}\Bigr\rfloor\enspace,
  \end{equation}
  we estimate the number of occurrences $m$ of $M(k)$ in \eqref{eq:sumM} and update
  \begin{equation}\label{eq:Mx_update}
    \var{Mx}[i]\gets \var{Mx}[i]-m\cdot M(k)\enspace.
  \end{equation}
  After processing all $k=1,\dots,D$,
  there remains to update $\var{Mx}[i]$ by $M(x_i/d)$ for all $\lfloor x_i/d\rfloor >D$.
  With the help of equality \eqref{eq:xi_by_d} it is enough to update $\var{Mx}[i]$ by $M(x_{d^2i})$ for all $d^2i<I$.
  After these updates we will have $\var{Mx}[i]=M(x_i)$.

\subsection{Dealing with \texorpdfstring{$\var{Mx}$}{Mx} Array Updates}
\label{sec:mem-updates}

  The problem is how to, for given $k$, quickly find all possible values of $i$, that there exists an integer $d\ge 2$ fulfilling \eqref{eq:ik_property}.
  There is no simple way to do it in expected constant time.
  Instead, for given $i$ we can easily calculate successive~$k$.
  \begin{lemma}
    \label{lem:next_k}
    Suppose that for a given integer $i\in[1,I)$ and an integer $k$
    there exists an integer $d$ satisfying \eqref{eq:ik_property}.
    Let us denote
    \begin{align*}
      d_a&=\Bigl\lfloor\frac{x_i}{k}\Bigr\rfloor\enspace,\\
      d_b&=\Bigl\lfloor\frac{x_i}{k+1}\Bigr\rfloor\enspace,
    \end{align*}
    then
    \begin{enumerate}[(i)]
      \item
	the number of occurrences $m$, needed for update \eqref{eq:Mx_update}, equals $d_a-d_b$,
      \item
	the next integer $k$ satisfying \eqref{eq:ik_property} is for $d=d_b$,
	and it is equal to $\lfloor x_i/d_b\rfloor$.
    \end{enumerate}
  \end{lemma}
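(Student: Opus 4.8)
The plan is to fix $i$, abbreviate $x=x_i$, and reduce both claims to the elementary description of which divisors $d$ produce a prescribed quotient. First I would record the basic equivalence
\begin{equation*}
  \Bigl\lfloor\frac{x}{d}\Bigr\rfloor=k
  \iff k\le\frac{x}{d}<k+1
  \iff \frac{x}{k+1}<d\le\frac{x}{k}\enspace,
\end{equation*}
and then translate the real inequalities into integer ones: since $d$ is an integer, $x/(k+1)<d$ is the same as $d>\lfloor x/(k+1)\rfloor=d_b$, while $d\le x/k$ is the same as $d\le\lfloor x/k\rfloor=d_a$. Thus the set of $d$ with $\lfloor x/d\rfloor=k$ is exactly the integer interval $(d_b,d_a]$.

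For part (i), the number of integers in the half-open interval $(d_b,d_a]$ is $d_a-d_b$, and this is precisely the count $m$ of solutions of \eqref{eq:ik_property}. It then remains to argue that the restriction $d\ge 2$ appearing in the sum \eqref{eq:sumM} discards nothing here: the only value excluded is $d=1$, which yields the quotient $\lfloor x_i/1\rfloor=x_i$. Since $i\in[1,I)$ and the sequence $x$ is strictly decreasing, we have $x_i>x_I=D\ge k$, so the quotient at $d=1$ equals $x_i\ne k$; hence $d=1$ never contributes to value $k$, and the formula $m=d_a-d_b$ stands unchanged.

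For part (ii), I interpret ``next'' as the smallest quotient strictly larger than $k$, since the $k$-values are processed in increasing order during the block sweep. Rewriting
\begin{equation*}
  \Bigl\lfloor\frac{x}{d}\Bigr\rfloor\ge k+1
  \iff \frac{x}{d}\ge k+1
  \iff d\le\frac{x}{k+1}
  \iff d\le d_b
\end{equation*}
shows that $d_b$ is the \emph{largest} divisor whose quotient exceeds $k$. Because the map $d\mapsto\lfloor x/d\rfloor$ is non-increasing, this maximal $d$ produces the \emph{minimal} quotient above $k$; therefore the next valid value is $k'=\lfloor x/d_b\rfloor$, attained at $d=d_b$, which is the claim. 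The inequality $m\ge 1$ (there is at least the given solution) forces $d_b<d_a$, and the hypothesis that a next $k$ exists is exactly the assertion $d_b\ge 1$.

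The floor manipulations are routine; the one place demanding care is the boundary bookkeeping — confirming that the $d\ge 2$ constraint is vacuous in the regime $k\le D<x_i$, and tracking which inequalities are strict versus weak so that the interval $(d_b,d_a]$ is counted without an off-by-one in $m$. I would keep the strict/weak distinction explicit throughout to guard against that error.
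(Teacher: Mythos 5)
Your proof is correct and follows essentially the same route as the paper: translate $\lfloor x_i/d\rfloor=k$ into the integer interval $d\in(d_b,d_a]$, count its $d_a-d_b$ elements, and use monotonicity of $d\mapsto\lfloor x_i/d\rfloor$ to identify $d=d_b$ as yielding the next $k$. Your extra checks (that the $d\ge2$ restriction is vacuous since $x_i>D\ge k$, and the explicit monotonicity argument for ``next'') are points the paper's one-line proof leaves implicit, but they do not change the argument.
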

  \begin{proof}
    All possible integers $d$ satisfying \eqref{eq:ik_property} are:
    \begin{equation}
      k\le\frac{x_i}{d}<k+1
      \iff
      \frac{x_i}{k+1}<d\le\frac{x_i}{k}
      \iff
      \Bigl\lfloor\frac{x_i}{k+1}\Bigr\rfloor<d\le\Bigl\lfloor\frac{x_i}{k}\Bigr\rfloor\enspace,
    \end{equation}
    so \eqref{eq:ik_property} is satisfied for $d\in(d_b,d_a]$,
    and the next $k$ satisfying \eqref{eq:ik_property} is for $d=d_b$.
    \qed
  \end{proof}
  Lemma \ref{lem:next_k}, for every $i$,
  allows us to walk through successive values of $k$,
  for which we have to update $\var{Mx}[i]$.
  Since the target is to reduce the memory usage,
  we need to group all updates into blocks.
  Algorithm \ref{alg:Mx_update} shows how to utilize Lemma~\ref{lem:next_k} in order to update $\var{Mx}[i]$ for the entire block $(a,b\,]$.
  \begin{algorithm}[htb]
    \caption{Updating $\var{Mx}[i]$ for a block $(a,b\,]$}
    \label{alg:Mx_update}
    \begin{algorithmic}[1]
      \Require bounds $0\le a < b$, index $i\in[1,I)$, the smallest $k\in(a,b\,]$ that there exists $d$ satisfying \eqref{eq:ik_property}
      \Ensure $\var{Mx}[i]$ is updated by all $M(k)$ for $k\in(a,b\,]$,
	the smallest $k>b$ for the next update is returned
      \Function{MxBlockUpdate}{$a,b,i,k$}
	\State $\displaystyle d_a\gets\Bigl\lfloor\frac{x_i}{k}\Bigr\rfloor$
	\Repeat
	  \State $\displaystyle d_b\gets\Bigl\lfloor\frac{x_i}{k+1}\Bigr\rfloor$
	  \State $\var{Mx}[i]\gets \var{Mx}[i] - (d_a-d_b)\cdot M(k)$
	  \State $\displaystyle k\gets\Bigl\lfloor\frac{x_i}{d_b}\Bigr\rfloor$
	  \State $d_a\gets d_b$
	\Until{$k>b$}
	\State\Return $k$
      \EndFunction
    \end{algorithmic}
  \end{algorithm}

\subsection{Introducing Additional Structures}
\label{sec:mem-structs}

  Let $B=\lfloor\sqrt{D}\rfloor$ be the block size,
  and $L=\lceil D/B\rceil$ be the number of blocks.
  We process $k$ values in blocks $(a_0,a_1],(a_1,a_2],\dots,(a_{L-1},a_L]$,
  where $a_l=Bl$ for $0\le l<L$ and $a_L=D$.
  We need additional structures to keep track for every $i\in[1,I)$ where is the next update:
  \begin{itemize}
    \item $\var{mink}[i]$ stores the next smallest $k$ for which $\var{Mx}[i]$ has to be updated,
    \item $\var{ilist}[l]$ is a list of indexes $i$ for which the next update will be for $k$ belonging to the block $(a_l,a_{l+1}]$.
  \end{itemize}
  Using these structures we are able to perform every update in constant time.
  Once we update $\var{Mx}[i]$ for all necessary $k\in(a_l,a_{l+1}]$ by $\textsc{MxBlockUpdate}$,
  we get next $k>a_{l+1}$ for which the next update should be done.
  We can easily calculate the block index $l'$ for this $k$ and schedule it by putting $i$ into $\var{ilist}[l']$.

\subsection{The Algorithm}
\label{sec:mem-alg}

  The result of the entire above discussion is presented in Algorithm \ref{alg:Sn_eff}.
  We managed to preserve the number of operations,
  therefore the time complexity remained $\softO(n^{2/5})$.
  Each of the additional structures has $I=\softO(n^{1/5})$ or $L=O(\sqrt{D})=\softO(n^{1/5})$ elements.
  Blocks have size $O(B)=O(\sqrt{D})=\softO(n^{1/5})$,
  Therefore, the memory complexity of Algorithm \ref{alg:Sn_eff} is $\softO(n^{1/5})$.
  \begin{algorithm}[htbp]
    \caption{Calculating $S(n)$ in time $\softO(n^{2/5})$ and in memory $\softO(n^{1/5})$}
    \label{alg:Sn_eff}
    \begin{algorithmic}[1]
      \State $\displaystyle I\gets\Theta(n^{1/5}(\log\log n)^{4/5}),  D\gets\biggl\lfloor\sqrt{\frac{n}{I}}\biggr\rfloor,
      B\gets \lfloor\sqrt{D}\rfloor, L\gets\lceil D/B\rceil$
      \label{li:init-beg}
      \For{$l=0,\dots,L-1$}
        \State $\var{ilist}[l]\gets\emptyset$
      \EndFor
      \For{$i=0,\dots,I-1$}
	\State $\var{Mx}[i]\gets 1$
	\State $\var{mink}[i]\gets 1$
	\State $\var{ilist}[0]\gets \var{ilist}[0]\cup\{i\}$
      \EndFor
      \State $s_1\gets 0$
      \label{li:init-end}
      \For{$l=0,\dots,L-1$}\label{li:main_loop-beg}
      \Comment{blocks processing loop}
        \State \Call{TabulateMöbiusBlock}{$a_l, a_{l+1}$}
        \For{$k\in(a_l,\dots,a_{l+1}]$}
	  \State $\displaystyle s_1\gets s_1+\var{mu}[k]\cdot\Bigl\lfloor\frac{n}{k^2}\Bigr\rfloor$
        \EndFor
        \State compute $M(k)$ for $k\in(a_l,a_{l+1}]$ from values $\var{mu}[k]$ and $M(a_l)$
        \ForEach{$i\in \var{ilist}[l]$}
	  \State $\var{mink}[i]\gets$ \Call{MxBlockUpdate}{$a_l,a_{l+1},i,\var{mink}[i]$}
	  \State $\displaystyle l'\gets\Bigl\lfloor\frac{\var{mink}[i]}{B}\Bigl\rfloor$
	  \Comment{next block where $\var{Mx}[i]$ has to be updated}
	  \If{$l'\le L$ and $\var{mink}[i]<x_i$}
	    \State $\var{ilist}[l']\gets \var{ilist}[l']\cup\{i\}$
	  \EndIf
        \EndFor
        \State $\var{ilist}[l]\gets\emptyset$
      \EndFor\label{li:main_loop-end}
      \For{$i=I-1,\dots,1$}\label{li:mx_update-beg}
      \Comment{updating $\var{Mx}[i]$ by $M(k)$ for $k>D$}
        \ForAll{$d\ge 2$ such that $d^2i<I$}
          \State $\var{Mx}[i]\gets \var{Mx}[i] - \var{Mx}[d^2i]$
        \EndFor
      \EndFor\label{li:mx_update-end}
      \State compute $s_2=S_2(n)$ by \eqref{eq:S2n}\label{li:s2}
      \State \Return $s_1+s_2$
    \end{algorithmic}
  \end{algorithm}

  Observe that most work is done in the blocks processing loop (lines \ref{li:main_loop-beg}--\ref{li:main_loop-end}),
  because every other part of the algorithm takes at most $\softO(n^{1/5})$ operations.
  Initialization of structures (lines \ref{li:init-beg}--\ref{li:init-end}) is proportional to their size $\softO(n^{1/5})$.
  Computing $S_2(n)$ by \eqref{eq:S2n} (line \ref{li:s2}) takes $O(I)=\softO(n^{1/5})$.
  Only the time complexity of the part responsible for updating $\var{Mx}[i]$ by $M(k)$ for $k>D$ (lines \ref{li:mx_update-beg}--\ref{li:mx_update-end}) is unclear.
  The total number of updates in this part is:
  \begin{equation*}
    \sum_{i=1}^{I-1}\sum_{\substack{2\le d\\d^2i<I}}1
    \le\sum_{i=1}^I\sqrt{\frac{I}{i}}
    =\sqrt{I}\cdot\sum_{i=1}^I\frac{1}{\sqrt{i}}
    =\sqrt{I}\cdot O(\sqrt{I})
    =O(I)\enspace,
  \end{equation*}
  thus it is $O(I)=\softO(n^{1/5})$.

\section{Parallelization}
\label{sec:parallel}

  As noted in Sect. \ref{sec:mem-alg}, the most time consuming part of Algorithm \ref{alg:Sn_eff} is the blocks processing loop.
  The basic idea is to distribute calculations made by this loop between $P$ processors.
  We split the interval $[1,D]$ into a list of $P$ smaller intervals:
  $(a_0, a_1], (a_1, a_2], \dots, (a_{P-1}, a_P]$,
  where $0=a_0<a_1<\dots<a_P=D$.
  Processor number $p$, $0\le p< P$, focus only on the interval $(a_p,a_{p+1}]$,
  and it is responsible for
  \begin{enumerate}[(i)]
    \item\label{li:parallel_s1}
      calculating part of the sum $S_1(n)$
      \begin{equation}\label{eq:parallel_s1}
        \sum_{k\in(a_p,a_{p+1}]}\mu(k)\cdot\Bigl\lfloor\frac{n}{k^2}\Bigr\rfloor\enspace,
      \end{equation}
    \item\label{li:parallel_Mx}
      making updates of the array $\var{Mx}[1,\dots,I-1]$ for all $k\in(a_p,a_{p+1}]$.
  \end{enumerate}
  All processors share $s_1$ value and $\var{Mx}$ array.
  The only changes are additions of an integer,
  and it is required that these changes are atomic.
  Alternatively, a processor can collect all changes in its own memory,
  and, in the end, it only once change the value $s_1$ and each entry of $\var{Mx}$ array.

  Although the above approach is extremely simple, there are two drawbacks.
  First, for updates (\ref{li:parallel_Mx}),
  a processor needs to calculate successive values of the Mertens function:
  $M(a_p+1),\dots,M(a_{p+1})$.
  Computation of \eqref{eq:parallel_s1} produce successive values of the Möbius function starting from $\mu(a_p+1)$,
  therefore the Mertens function values can be also computed if only we knew the value of $M(a_p)$.
  Unfortunately, there is no other way than computing it from scratch.
  However, to compute $M(x)$ there is an algorithm working in time $\softO(x^{2/3})$ and memory $\softO(x^{1/3})$.
  See for instance \cite{rivat},
  or \cite{farey} for a simpler algorithm missing a memory reduction.

  In our application we have $x\le D=\softO(n^{2/5})$,
  therefore cumulative additional time we spend in computing Mertens function values from scratch is
  $\softO(PD^{2/3})=\softO(Pn^{4/15}).$
  We want this does not exceed the targeted time of $\softO(n^{2/5})$,
  therefore the number of processors is limited by:
  \begin{equation}
    P=\softO\biggl(\frac{n^{2/5}}{n^{4/15}}\biggr)=\softO(n^{2/15})\enspace.
  \end{equation}
  
  Second drawback comes from an observation that
  the number of updates of $\var{Mx}$ array is not uniformly distributed
  on $k\in[1,D]$.
  For example for $k\le\sqrt{D}=\softO(n^{1/5})$ for every $i\in[1,I)$ there always exists $d\ge2$ such that \eqref{eq:ik_property} is satisfied,
  therefore for every such $k$ there will be $I-1=\softO(n^{1/5})$ updates.
  It means that in a very small block $(1,\lfloor\sqrt{D}\rfloor]$ there will be $\softO(n^{2/5})$ updates, which is proportional to the total number of updates.
  We see that splitting into blocks is non-trivial and we need better tools for measuring work in the blocks processing loop.

  Let $t_s$ be the average time of computing a single summand of the sum $S_1(n)$, and let $t_u$ be the average time of a single update of $\var{Mx}$ array entry.
  Consider a block $(0,a]$.
  Denote as $U(a)$ the number of updates which must be done in this block.
  Then the expected time of processing this block is
  \begin{equation}\label{eq:work}
    T(a)=t_sa+t_uU(a)\enspace.
  \end{equation}
  It shows up that $U(a)$ can be very accurately approximated by a closed formula:
  \begin{equation}\label{eq:Ua}
    U(a)=
    \begin{cases}
      Ia\quad&\text{for }a\le\sqrt[4]{\frac{n}{I}}\enspace,\\
      \frac{1}{3}\frac{n}{a^3}-2\frac{n^{1/2}I^{1/2}}{a}+\frac{8}{3}n^{1/4}I^{3/4}
      &\text{for }\sqrt[4]{\frac{n}{I}}<a\le D=\sqrt{\frac{n}{I}}\enspace.
    \end{cases}
  \end{equation}
  See App. \ref{sec:Ua} for the estimation.
  
  The work measuring function \eqref{eq:work} says that
  the amount of work for the block $(a_p,a_{p+1}]$ is
  $T(a_{p+1})-T(a_p)$.
  Using this we are able to distribute blocks between processors in a such way,
  that the work is assigned evenly.
  
\section{Results}
\label{sec:results}

  We calculated $S(10^e)$ for all integer $0\le e\le 36$.
  In App. \ref{sec:Sn-power10} the computed values are listed.
  First, for $e\le 26$ we prepared the results using Algorithm \ref{alg:Sn_basic},
  the simpler and slower algorithm.
  Then we applied Algorithm \ref{alg:Sn_eff} on a single thread.
  Thus we verified its correctness for $e\le 26$ and we prepared further values for $e\le 31$.
  
  Finally, we used parallel implementation for $24\le e\le 36$.
  The computations were performed in ICM UW under grant G43-5 on the cluster \emph{Halo2}. See \cite{halo2} for a specification.
  The results for $e\le 31$ agreed with the previously prepared results.
  The timings of these computations are presented in Table \ref{tab:times}.
  \begin{table}[htbp]
  \centering
  \begin{center}
    \begin{tabular}{c@{\hspace{6pt}}r@{\hspace{6pt}}r@{\hspace{6pt}}r}
      & processors & computation & ideal \\
      $e$ & used & time & time \\
      \hline
      24 & 16 & 51 & 40 \\
      25 & 16 & 124 & 107 \\
      26 & 16 & 279 & 266 \\
      27 & 16 & 769 & 720 \\
      28 & 16 & 1928 & 1863 \\
      29 & 32 & 2594 & 2446 \\
      30 & 64 & 3439 & 3317 \\
      31 & 64 & 9157 & 8912 \\
      32 & 128 & 12138 & 11771 \\
      33 & 256 & 18112 & 16325 \\
      34 & 256 & 46540 & 43751 \\
      35 & 256 & 119749 & 115448 \\
      36 & 256 & 315313 & 303726
    \end{tabular}
  \end{center}
  \caption{Computation times in seconds of $S(10^e)$ for $24\le e\le 36$}
  \label{tab:times}
  \end{table}
  
  Computation time is calendar time in seconds of cluster occupation.
  Ideal time represents how long computations could take,
  if communication between processors was ignored
  and if the work was distributed equally.
  This was calculated by taking cumulative time of the actual work done for each processor and dividing by the number of processors.
  We see that ideal time is close to computation time showing an experimental evidence of scalability of the parallel algorithm.

\bibliographystyle{splncs03}
\bibliography{sqrfree}

\appendix

\section{Proof of Theorem \ref{thm:Sn}}
\label{sec:proof-Sn}

  Let our universe be all positive integers less or equal to $n$:
  \begin{equation}
    U=\{1,\ldots,n\}\enspace.
  \end{equation}
  For a prime integer $p$, let us define a set $A_p$:
  \begin{equation}
    A_p = \{ a\in U : p^2\text{ divides }a\}\enspace.
  \end{equation}
  Complement of set $A_p$ represents a set of all integers less or equal to $n$ not divisible by $p^2$.
  We want to count integers not divisible by any prime square,
  therefore the number we are searching for is the size of the set
  $\bigcap_{p\text{ prime}}\overline{A_p}$.
  By the inclusion-exclusion principle we have:
  \begin{align}
    \bigcap_{p\text{ prime}}\overline{A_p}\;=\;&
    |U|-\sum_{p\text{ prime}}|A_p|
    +\sum_{\substack{p<q\\p,q\text{ prime}}}|A_p\cap A_q|\notag\\
    & -\sum_{\substack{p<q<r\\p,q,r\text{ prime}}}|A_p\cap A_q\cap A_r|
    +\dots
    \notag\\
    =& \sum_{i=0}^\infty(-1)^i\sum_{\substack{p_1<\dots<p_i\\p_1,\dots,p_i\text{ prime}}}|A_{p_1}\cap\dots\cap A_{p_i}|\label{eq:inc-exc}
  \end{align}
  Now, observe that
  \begin{equation*}
    |A_{p_1}\cap\dots\cap A_{p_i}|=
    \biggl\lfloor\frac{n}{p_1^2\cdot\ldots\cdot p_i^2}\biggr\rfloor\enspace.
  \end{equation*}
  Using the Iverson bracket we can write \eqref{eq:inc-exc} as
  \begin{equation*}
    \eqref{eq:inc-exc}=
    \sum_d\sum_i[d=p_1\cdot\ldots\cdot p_i\land p_1<\dots<p_i\land p_1,\dots,p_i\text{ prime}](-1)^i\Bigl\lfloor\frac{n}{d^2}\Bigr\rfloor\enspace.
  \end{equation*}
  The expression $[d\text{ is a product of }i\text{ distinct primes}](-1)^i$ means the Möbius function $\mu(d)$ in other words,
  therefore we get the final formula of Theorem \ref{thm:Sn}.

\section{Computing the Möbius Function}
\label{sec:calc-mu}

  To compute values of the Möbius function we exploit the following property:
  \begin{equation}
    \mu(k)=
    \begin{cases}
      0\quad&\text{if $p^2$ divides k}\enspace,\\
      (-1)^e&\text{if }k=p_1\cdot\ldots\cdot p_e\enspace.
    \end{cases}
  \end{equation}
  Using a sieve we can find values of $\mu(k)$ for all $k=1,\ldots,K$ simultaneously,
  where $K=\sqrt{n}$, as presented in Algorithm \ref{alg:mobius_basic}.
  \begin{algorithm}[htb]
    \caption{Computing values of the Möbius function: the basic approach}
    \label{alg:mobius_basic}
    \begin{algorithmic}[1]
      \Require bound $1\le K$
      \Ensure $\mu(k)=\var{mu}[k]$ for $k=1,\dots,K$
      \Procedure{TabulateMöbius}{$K$}
	\For{$k=1,\dots,K$}
	  \State $\var{mu}[k]\gets 1$
	\EndFor
	\ForEach{prime $p\le K$}
	  \ForEach{$k\in[1,K]$ divisible by $p^2$}
	    \State $\var{mu}[k]\gets 0$
	  \EndFor
	  \ForEach{$k\in[1,K]$ divisible by $p$}
	    \State $\var{mu}[k]\gets -\var{mu}[k]$
	  \EndFor
	\EndFor
      \EndProcedure
    \end{algorithmic}
  \end{algorithm}
  To generate all primes less or equal to $K$ we can use the sieve of Eratosthenes.
  The memory complexity is $O(K)$ and the time complexity is $O(K\log\log K)$.

  The above method could be improved to fit $O(\sqrt{K})$ memory by tabulating in blocks.
  We split the array $\var{mu}$ to blocks of the size $B=\Theta(\sqrt{K})$,
  and for each block we tabulate $\mu(\cdot)$ separately using Algorithm \ref{alg:mobius_eff}.
  \begin{algorithm}[htb]
    \caption{Computing values of the Möbius function: memory efficient sieving in blocks}
    \label{alg:mobius_eff}
    \begin{algorithmic}[1]
      \Require bounds $0<a<b$
      \Ensure $\mu(k)=\var{mu}[k]$ for each $k\in(a,b\,]$
      \Procedure{TabulateMöbiusBlock}{$a,b$}
	\ForEach{$k\in(a,b\,]$}
	  \State $\var{mu}[k]\gets 1$
	  \State $m[k]\gets 1$
	  \Comment multiplicity of all found prime divisors of $k$
	\EndFor
	\ForEach{prime $p\le\sqrt{b}$}
	  \ForEach{$k\in(a,b\,]$ divisible by $p^2$}
	    \State $\var{mu}[k]\gets 0$
	  \EndFor
	  \ForEach{$k\in(a,b\,]$ divisible by $p$}
	    \State $\var{mu}[k]\gets -\var{mu}[k]$
	    \State $m[k]\gets m[k]\cdot p$
	  \EndFor
	\EndFor
	\ForEach{$k\in(a,b\,]$}
	  \If{$m[k]<k$}
	    \Comment $k=m[k]\cdot q$, where $q$ is prime and $q>\sqrt{b}$
	    \State $\var{mu}[k]\gets -\var{mu}[k]$
	  \EndIf
	\EndFor
      \EndProcedure
    \end{algorithmic}
  \end{algorithm}
  For each block we use only primes less or equal to $\sqrt{K}$,
  and we need only $O(\sqrt{K})$ memory.
  There is at most $K/B=O(\sqrt{K})$ blocks.
  Therefore, for each block the number of operations is
  \begin{equation}
    O(\sqrt{K})+\sum_{p\leq\sqrt{K}}\Bigl(1+\frac{B}{p^2}+\frac{B}{p}\Bigr)=
    O(\sqrt{K}+B\log\log K)=O(\sqrt{K}\log\log K)\enspace,
  \end{equation}
  which results in $O(K\log\log K)$ time complexity for the whole algorithm.

\section{Estimating \texorpdfstring{$U(a)$}{U(a)}}
\label{sec:Ua}

  Instead of computing the exact number of updates in a block $(0,a]$,
  we will compute an approximation of the expected number of updates as follows.
  Let us fix $k\in(0,a]$ and $x=x_i$ for some $i\in[1,I)$.
  The probability that there exists $d$, such that \eqref{eq:ik_property} is satisfied, equals
  \begin{equation*}
    P(k,x)=
    \begin{cases}
      1\quad&\text{for }k\le\sqrt{x}\enspace,\\
      \frac{x}{k}-\frac{x}{k+1}&\text{for }k>\sqrt{x}\enspace.
    \end{cases}
  \end{equation*}
  Let us define $U(a,x_i)$ as
  the expected number of updates of entry $\var{Mx}[i]$ for all $k\in(0,a]$.
  Let $x=x_i$.
  If $a\le\sqrt{x}$ then
  \begin{equation*}
    U(a,x)
    =\sum_{k=1}^aP(k,x)
    =\sum_{k=1}^a 1=a\enspace,
  \end{equation*}
  and if $a>\sqrt{x}$ then
  \begin{equation*}
    U(a,x)=
    \sum_{1\le k\le\sqrt{x}}1
    +\sum_{\sqrt{x}<k\le a}\Bigl(\frac{x}{k}-\frac{x}{k+1}\Bigr)
    =\lfloor\sqrt{x}\rfloor+\Bigl(\frac{x}{\lfloor\sqrt{x}\rfloor}-\frac{x}{a}\Bigr)
    \approx 2\sqrt{x}-\frac{x}{a}\enspace,
  \end{equation*}
  thus $U(a,x)$ can be presented as the formula:
  \begin{equation}\label{eq:Uax}
    U(a,x)=
    \begin{cases}
      a\quad&\text{for }a\le\sqrt{x}\enspace,\\
      2\sqrt{x}-\frac{x}{a}&\text{for }a>\sqrt{x}\enspace.
    \end{cases}
  \end{equation}
  Now we are ready to compute $U(a)$:
  \begin{equation}\label{eq:Ua_sum}
    U(a)=\sum_{1\le i<I}U(a,x_i)\enspace.
  \end{equation}
  Expanding a term $U(a,x_i)$ using \eqref{eq:Uax} depends on the inequality:
  \begin{equation*}
    a\le\sqrt{x_i}\iff
    a\le\sqrt{\sqrt{\frac{n}{i}}}\iff
    a\le\sqrt[4]{\frac{n}{i}}\iff
    i\le\frac{n}{a^4}\enspace.
  \end{equation*}
  Therefore $U(a,x_i)$ always expand to $a$ if $a\le\sqrt[4]{\frac{n}{I}}$,
  so then $U(a)=Ia$, and this is the first case of \eqref{eq:Ua}.
  Otherwise, if $a>\sqrt[4]{\frac{n}{I}}$, we split the summation \eqref{eq:Ua_sum}:
  \begin{align*}
    \sum_{1\le i<I}U(a,x_i)
    &=\sum_{1\le i\le\frac{n}{a^4}}U(a,x_i)
    +\sum_{\frac{n}{a^4}<i<I}U(a,x_i)\\
    &=\frac{n}{a^4}a+
    \sum_{\frac{n}{a^4}<i<I}\biggl(
    2\sqrt[4]{\frac{n}{i}}-\frac{1}{a}\sqrt{\frac{n}{i}}
    \,\biggr)\enspace.
  \end{align*}
  Now, we apply the following approximation formulas for sums:
  \begin{align*}
    \sum_{k=1}^x k^{-1/4}&\approx\frac{4}{3}x^{3/4}\enspace,\\
    \sum_{k=1}^x k^{-1/2}&\approx2x^{1/2}\enspace,
  \end{align*}
  and as a result we get the second case of \eqref{eq:Ua}:
  \begin{align*}
    U(a)&\approx
    \frac{n}{a^3}
    +2n^{1/4}\cdot\frac{4}{3}\biggl(I^{3/4}-\Bigl(\frac{n}{a^4}\Bigr)^{3/4}\biggr)
    -\frac{1}{a}n^{1/2}\cdot2\biggl(I^{1/2}-\Bigl(\frac{n}{a^4}\Bigr)^{1/2}\biggr)\\
    &=\frac{1}{3}\frac{n}{a^3}-2\frac{n^{1/2}I^{1/2}}{a}+\frac{8}{3}n^{1/4}I^{3/4}\enspace.
  \end{align*}

\section{\texorpdfstring{$S(n)$}{S(n)} values for powers of 10}
\label{sec:Sn-power10}

  \begin{table}[htbp]
  \centering
  \begin{center}
    \begin{tabular}{r|r@{}l}
      $e$ & & $S(10^e)$ \\
      \hline
      0 & & 1 \\
      1 & & 7 \\
      2 & & 61 \\
      3 & & 608 \\
      4 & & 6083 \\
      5 & & 60794 \\
      6 & & 607926 \\
      7 & & 6079291 \\
      8 & & 60792694 \\
      9 & & 607927124 \\
      10 & & 6079270942 \\
      11 & & 60792710280 \\
      12 & & 607927102274 \\
      13 & & 6079271018294 \\
      14 & & 60792710185947 \\
      15 & & 607927101854103 \\
      16 & & 6079271018540405 \\
      17 & & 60792710185403794 \\
      18 & & 607927101854022750 \\
      19 & & 6079271018540280875 \\
      20 & & 60792710185402613302 \\
      21 & & 607927101854026645617 \\
      22 & & 6079271018540266153468 \\
      23 & & 60792710185402662868753 \\
      24 & & 607927101854026628773299 \\
      25 & & 6079271018540266286424910 \\
      26 & & 60792710185402662866945299 \\
      27 & & 607927101854026628664226541 \\
      28 & & 6079271018540266286631251028 \\
      29 & & 60792710185402662866327383816 \\
      30 & & 607927101854026628663278087296 \\
      31 & & 6079271018540266286632795633943 \\
      32 & & 60792710185402662866327694188957 \\
      33 & & 607927101854026628663276901540346 \\
      34 & & 6079271018540266286632767883637220 \\
      35 & & 60792710185402662866327677953999263 \\
      36 & & 607927101854026628663276779463775476 \\
      \hline \\
      $\frac{6}{\pi^2}$ & 0. & $60792710185402662866327677925836583342615264\ldots$
    \end{tabular}
  \end{center}
  \caption{Values of $S(10^e)$ for $0\le e\le 36$}
  \label{tab:Sn-power10}
  \end{table}

\end{document}